\newtheorem{theorem}{Theorem}[section]
\newtheorem{proposition}[theorem]{Proposition}
\newtheorem{remark}[theorem]{Remark}
\def\R{{\mathbb R}}
\def\E{{\mathbb E}}
\def\P{{\mathbb P}}
\newcommand{\1}{\mathbf{1}}
\numberwithin{equation}{section}
\begin{document}

\title[Local Nondeterminism and Modulus of Continuity for Stochastic Wave Equation]
{Local Nondeterminism and the Exact Modulus of Continuity for Stochastic Wave Equation}
\author{Cheuk Yin Lee
\and Yimin Xiao
}

\keywords{Stochastic wave equation, strong local nondeterminism,
uniform modulus of continuity, Gaussian random field.}

\subjclass[2010]{
60G15, 
60G17, 
60H15. 
}

\begin{abstract}
We consider the linear stochastic wave equation driven by a Gaussian noise.
We show that the solution satisfies a certain form of strong local nondeterminism
and we use this property to derive the exact uniform modulus of continuity for the
solution.
\end{abstract}

\maketitle

\section{Introduction}

Let $k \ge 1$ and $\beta \in (0, k \wedge 2)$, or $k = 1 = \beta$. We consider
the linear stochastic wave equation
\begin{align}\label{eq1.1}
\begin{cases}
\displaystyle{\frac{\partial^2}{\partial t^2}u(t, x) = \Delta u(t, x) + \dot{W}(t, x)},
\quad t \ge 0, x \in \mathbb{R}^k,\\
u(0, x) = \displaystyle{\frac{\partial}{\partial t}u(0, x) = 0}.
\end{cases}
\end{align}
Here, $\dot{W}$ is the space-time Gaussian white noise if $k = 1 =\beta$; and
is a Gaussian noise that is white in time and has a spatially homogeneous
covariance given by the Riesz kernel with exponent $\beta$ if $k \ge 1$ and
$\beta \in (0, k\wedge 2)$, i.e.
\[ \E(\dot{W}(t, x) \dot{W}(s, y)) = \delta(t-s) |x-y|^{-\beta}. \]
The existence of real-valued process solution to \eqref{eq1.1} was discussed
in \cite{Walsh,D99}. Regarding the sample paths of the solution, results on
the H\"{o}lder regularity and hitting probability have been proved in \cite{DS10}.
In this present paper, we determine the exact uniform modulus of continuity
of the solution $u(t, x)$ in the time and space variables $(t, x)$. For this purpose,
we show that the Gaussian random field $\{u(t, x), t \ge 0, x \in \R^k\}$ satisfies
a form of strong local nondeterminism.

The property of local nondeterminism is useful for investigating sample paths
of Gaussian random fields. This notion was first introduced by Berman \cite{B73}
for Gaussian processes and extended by Pitt \cite{P98} for Gaussian random fields
to study their local times. Later, the property of strong local nondeterminism
was developed to study exact regularity of local times, small ball probability
 and other sample paths properties for Gaussian random fields (see, e.g., \cite{X06,X08}).

It is well known that the Brownian sheet does not satisfy the property of (strong) local
nondeterminism (in the sense of Pitt \cite{P98}) but it satisfies sectorial local nondeterminism
\cite[Proposition 4.2]{KX07}. Recall from \cite[Theorem 3.1]{Walsh} that when $k = 1
= \beta$ and $\dot{W}$ is the space-time white noise, the solution $u(t, x)$ of \eqref{eq1.1}
has the representation
\begin{equation}\label{beta1}
u(t, x) = \frac{1}{2}\,\hat{W}\left(\frac{t-x}{\sqrt{2}}, \frac{t+x}{\sqrt{2}}\right),
\end{equation}
where $\hat{W}$ is a modified Brownian sheet (cf. \cite[p.281]{Walsh}). In this
case, many properties of the solution $u(t, x)$ can be derived from those of
$\hat{W} (t,x).$ For $\beta \ne 1$ or $k \ge 2$, there are few precise results (such
as the exact modulus of continuity, modulus of non-differentiability, multifractal
analysis of exceptional oscillations) for the sample function $u(t, x)$.
Investigation of these problems naturally leads to the study of local nondeterminism
for the solution $u(t, x)$.

In this paper, we investigate the property of local nondeterminism for the solution
of \eqref{eq1.1} and use this property to study the uniform modulus of continuity
of its sample functions. The main results of this paper are Proposition \ref{prop2.1}
and Theorem \ref{Th:mod}. Proposition \ref{prop2.1} shows that for a general
dimension $k$, the solution $u(t, x)$  satisfies an integral form of local
nondeterminism. When $k = 1$ and $\beta = 1$, this property (see \eqref{beta2}
below) can also be derived from the sectorial local nondeterminism for the Brownian
sheet in \cite[Proposition 4.2]{KX07} after a change of coordinates.  While for
$k = 1$ and $\beta \in (0, 1)$, the property \eqref{beta2} is similar to the sectorial
local nondeterminism in \cite[Theorem 1]{WX07} for a fractional Brownian sheet,
which suggests that the sample function $u(t, x)$ may have some subtle properties
that are different from those of Gaussian random fields with stationary increments
(an important example of the latter is fractional Brownian motion). We believe that
Proposition \ref{prop2.1} is useful for studying precise regularity and other
sample path properties of $u(t, x)$. In Theorem \ref{Th:mod}, we use it to derive
the exact uniform modulus of continuity of $u(t, x)$.

\medskip

{\bf Acknowledgements.} The authors thank Professor Raluca Balan and Ciprian Tudor
for stimulating discussions and for their generosity in encouraging the authors
to publish this paper. The research of Yimin Xiao
is partially supported by NSF grants DMS-1607089 and DMS-1855185.

\section{Local Nondeterminism}

Let $G$ be the fundamental solution of the wave equation.
Recall that if $k=1$, $G(t, x) = \frac{1}{2}\1_{\{ |x| < t \}}$; if
$k \ge 2$ and $k$ is even,
\begin{align*}
G(t, x) &= c_k \left(\frac{1}{t} \frac{\partial}{\partial t}\right)^{
(k-2)/2} (t^2 - |x|^2)^{-1/2}_+;
\end{align*}
if $k \ge 3$ and $k$ is odd,
\begin{align*}
G(t, x) &= c_k \left( \frac{1}{t} \frac{\partial}{\partial t} \right)^{(k-3)/2}
\frac{\sigma^k_t(dx)}{t},
\end{align*}
where $\sigma^k_t$ is the uniform surface measure on the sphere
$\{ x \in \mathbb{R}^k : |x| = t \}$, see \cite[Chapter 5]{Folland}.
Note that for $k \ge 3$, $G$ is not a function but a distribution.
Also recall that for any dimension $k \ge 1$, the Fourier transform
of $G$ in variable $x$ is given by
\begin{align}\label{eq2.1}
\mathscr{F}(G(t, \cdot))(\xi) = \frac{\sin(t|\xi|)}{|\xi|}, \quad t \ge 0,
\xi \in \mathbb{R}^k.
\end{align}

In \cite{D99}, Dalang extended Walsh's stochastic integration and
proved that the real-valued process solution of equation
\eqref{eq1.1} is given by
\[
u(t, x) = \int_0^t\int_{\mathbb{R}^k} G(t-s, x-y)\, W(ds\,dy),
\]
where $W$ is the martingale measure induced by the noise $\dot{W}$.
The range of $\beta$ has been chosen so that the stochastic integral
exists. Recall from Theorem 2 of \cite{D99} that
\begin{align}\label{eq2.2}
\E\bigg[\bigg(\int_0^t \int_{\mathbb{R}^k} H(s, y) W(ds\, dy) \bigg)^2\bigg]
= c_{k, \beta} \int_0^t ds \int_{\mathbb{R}^k} d\xi \, |\xi|^{\beta-k}
|\mathscr{F}(H(s, \cdot))(\xi)|^2
\end{align}
provided that $s \mapsto H(s, \cdot)$ is a deterministic function with
values in the space of nonnegative distributions with rapid decrease and
\[\int_0^t ds \int_{\mathbb{R}^k} d\xi \, |\xi|^{k-\beta}
|\mathscr{F}(H(s, \cdot)(\xi)|^2 < \infty.\]
The following result shows that the solution $u(t, x)$ satisfies
a certain form of strong local nondeterminism.

\begin{proposition}\label{prop2.1}
Let $0 < a < a' < \infty$ and $0 < b < \infty$. There exist
constants $C > 0$ and $\delta > 0$ depending on $a$, $a'$ and $b$
such that for all integers $n \ge 1$ and all $(t, x), (t^1, x^1), \dots,
(t^n, x^n)$ in $[a, a'] \times [-b, b]^k$ with $|t-t^j| + |x-x^j|\le \delta$,
we have
\begin{equation}\label{Eq:SLND}
 \operatorname{Var}{(u(t, x)| u(t^1, x^1), \dots, u(t^n, x^n))}
\ge C \int_{\mathbb{S}^{k-1}} \min_{1\le j \le n} |(t-t^j) + (x - x^j)
\cdot w|^{2-\beta}\, dw,
\end{equation}
where $dw$ is the surface measure on the unit sphere $\mathbb{S}^{k-1}$.
\end{proposition}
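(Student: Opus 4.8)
The plan is to combine the classical reduction of a conditional variance to an $L^2$-approximation problem with the spectral isometry \eqref{eq2.2}, and then to isolate, for each direction $w \in \mathbb{S}^{k-1}$, a one-dimensional ``sectorial'' lower bound in the light-cone coordinate $\phi_w := t + x\cdot w$. First I would use the standard Gaussian fact that, writing $u_0 = u(t,x)$ and $u_j = u(t^j,x^j)$,
\[
\operatorname{Var}(u_0 \mid u_1,\dots,u_n) = \inf_{\alpha_1,\dots,\alpha_n \in \R}\E\Big[\Big(u_0 - \sum_{j=1}^n \alpha_j u_j\Big)^2\Big].
\]
Since $G(r,\cdot)$ is supported on the forward light cone (so $G(t^j-s,\cdot)=0$ for $s>t^j$), the difference $u_0-\sum_j\alpha_j u_j$ is the Wiener integral of $H(s,y)=G(t-s,x-y)-\sum_j\alpha_j G(t^j-s,x^j-y)$ over $[0,T]\times\R^k$ with $T=\max(t,t^1,\dots,t^n)$. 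Applying \eqref{eq2.2} together with \eqref{eq2.1} and the shift identity $\mathscr F(G(r,x-\cdot))(\xi)=e^{-i\xi\cdot x}\sin(r|\xi|)/|\xi|$, the conditional variance becomes $c_{k,\beta}$ times
\[
\inf_{\alpha}\int_0^T ds\int_{\R^k} d\xi\,|\xi|^{\beta-k-2}\Big|e^{-i\xi\cdot x}\sin((t-s)_+|\xi|)-\sum_j\alpha_j e^{-i\xi\cdot x^j}\sin((t^j-s)_+|\xi|)\Big|^2.
\]

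Next I would pass to polar coordinates $\xi=\rho w$, which turns $|\xi|^{\beta-k-2}d\xi$ into $\rho^{\beta-3}d\rho\,dw$ and produces the surface integral appearing on the right-hand side of \eqref{Eq:SLND}. Using the elementary inequality $\inf_\alpha\int_{\mathbb{S}^{k-1}}(\cdots)\,dw\ge\int_{\mathbb{S}^{k-1}}\inf_\alpha(\cdots)\,dw$ (the infimum of an integral dominates the integral of the pointwise infima), it then suffices to prove, for each fixed $w$, the one-dimensional bound
\[
\inf_{\alpha}\int_0^\infty d\rho\,\rho^{\beta-3}\int_0^T ds\,\Big|\sin((t-s)_+\rho)e^{-i\rho\theta}-\sum_j\alpha_j\sin((t^j-s)_+\rho)e^{-i\rho\theta^j}\Big|^2\ge C\min_{1\le j\le n}|\phi_w-\phi_w^j|^{2-\beta},
\]
where $\theta=x\cdot w$, $\theta^j=x^j\cdot w$ and $\phi_w-\phi_w^j=(t-t^j)+(x-x^j)\cdot w$. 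Writing $\sin((t-s)\rho)e^{-i\rho\theta}=\frac{1}{2i}\big(e^{i\rho(\psi_w-s)}-e^{-i\rho(\phi_w-s)}\big)$ with $\psi_w=t-\theta$ exposes the two light-cone phases, and the target exponent is supplied by the elementary formula $\int_0^\infty\rho^{\beta-3}(1-\cos(\rho a))\,d\rho=c_\beta|a|^{2-\beta}$, valid precisely for $\beta\in(0,2)$, with the boundary case $k=1=\beta$ treated separately (there one may instead invoke the sectorial local nondeterminism of the Brownian sheet via \eqref{beta1}).

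The hard part will be this last displayed inequality: the uniform-in-$\alpha$ lower bound for arbitrary $n$, producing exactly the nearest-neighbour gap $r:=\min_j|\phi_w-\phi_w^j|$. Two features make it delicate. First, the conditioning points may cluster in the coordinate $\phi_w$; localizing the $\rho$-integral to a single band $\rho\asymp r^{-1}$ is a valid but too lossy lower bound, because clustered phases can be matched on a short band by coefficients $\alpha_j$ of large magnitude. One must keep the full frequency integral, so that such large coefficients are penalized at high frequencies. Second, a conditioning point with $t^j>t$ has a backward light cone \emph{containing} that of $(t,x)$, so there is in general no space-time region seen by $u(t,x)$ but by none of the $u(t^j,x^j)$; a purely geometric orthogonalization fails, and the argument must be carried out entirely on the spectral side.

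I expect to resolve this by a Fourier-analytic estimate in the spirit of the sectorial local nondeterminism for fractional Brownian sheets in \cite{KX07,WX07}. After the reduction to the single coordinate $\phi_w$, the key point is that $s\mapsto\sin((t-s)_+\rho)$ has a corner at $s=t$ (a jump of its $s$-derivative) not shared by any conditioning term with $t^j\ne t$, while for conditioning points with $t^j=t$ the separation comes instead from the phase difference $e^{-i\rho\theta}-e^{-i\rho\theta^j}$; in either regime the formula $\int_0^\infty\rho^{\beta-3}(1-\cos\rho a)\,d\rho\asymp|a|^{2-\beta}$ converts the relevant gap into $r^{2-\beta}$. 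Showing that these contributions cannot be cancelled by any choice of $\alpha$, uniformly in $n$ and robustly under clustering and under engulfing light cones, is the heart of the argument; verifying that the mechanism survives the coupling between the time variable $s$ and the oscillation in $\rho$, with a constant $C$ independent of $n$, is the main obstacle I anticipate.
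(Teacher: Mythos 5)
Your setup coincides with the paper's: reduce the conditional variance to $\inf_\alpha\E[(u(t,x)-\sum_j\alpha_ju(t^j,x^j))^2]$, apply the spectral isometry \eqref{eq2.2} with \eqref{eq2.1}, pass to polar coordinates $\xi=\rho w$, and reduce to a lower bound for each fixed direction $w$ in terms of $r(w)=\min_j|(t-t^j)+(x-x^j)\cdot w|$. But the one step you defer --- the lower bound uniform in $\alpha_1,\dots,\alpha_n$ and in $n$, robust under clustering of the projected points and under conditioning points whose light cones contain that of $(t,x)$ --- is the entire content of the proposition, and your proposal does not supply it. The corner of $s\mapsto\sin((t-s)_+\rho)$ at $s=t$ and the identity $\int_0^\infty\rho^{\beta-3}(1-\cos(\rho a))\,d\rho\asymp|a|^{2-\beta}$ are suggestive but do not by themselves rule out cancellation by a cleverly chosen linear combination; you explicitly flag this as the anticipated obstacle rather than resolving it. As written, the proposal is an incomplete proof.

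The paper closes this gap with a short duality argument that you may find worth internalizing, since it sidesteps every difficulty you list. Fix $\delta=a/2$ and restrict the $s$-integral to $[0,a/2]$, so all indicators $\1_{[0,t]}(s)$, $\1_{[0,t^j]}(s)$ equal $1$. Let $\varphi$ be a smooth bump supported in $(-\lambda,\lambda)$ with $\lambda=\min\{1,a/[2(a'+2\sqrt{k}b)]\}$ and $\varphi(0)=1$, and set $\varphi_r(y)=r^{-1}\varphi(y/r)$. Pair the spectral density of the linear combination against the single test function $e^{-i(t-s)\rho}\,\widehat{\varphi}_{r(w)}(\rho)$ and invert the Fourier transform: the result is a sum of values of $\varphi_{r(w)}$ at the points $0$, $2(t-s)$, $(x^j-x)\cdot w-(t^j-t)$, and $(x^j-x)\cdot w-(t^j-t)+2(t^j-s)$. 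The choices of $\delta$ and $\lambda$ force every argument except $0$ to lie outside the support of $\varphi_{r(w)}$, so the pairing equals $a\pi\,r(w)^{-1}$ \emph{regardless of the coefficients} $\alpha_j$ --- the coefficients never enter, which is precisely why clustering and engulfing light cones cause no trouble and why no region ``seen only by $u(t,x)$'' is needed. Cauchy--Schwarz then yields $A(w)\ge C\,r(w)^{2-\beta}$ with a constant independent of $n$, and integrating over $\mathbb{S}^{k-1}$ finishes the proof. Without this (or an equivalent) construction, your argument does not go through.
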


\begin{remark}
When $k = 1$, the surface measure $dw$ in \eqref{Eq:SLND}
is supported on $\{-1, 1\}$.  It follows that $u(t, x)$ satisfies
sectorial local nondeterminism:
\begin{equation}\label{beta2}
\begin{split}
& \operatorname{Var}(u(t, x)| u(t^1, x^1), \dots, u(t^n, x^n))\\
& \ge C \left( \min_{1\le j \le n}|(t-t^j) + (x - x^j)|^{2-\beta}
+ \min_{1\le j \le n}|(t-t^j) - (x - x^j)|^{2-\beta} \right).
\end{split}
\end{equation}
When $\beta = 1$, this can be derived from (\ref{beta1}) and
Proposition 4.2 in \cite{KX07} by a change of coordinates
$(t, x) \mapsto (t+x, t-x)$. When $\beta \ne 1$, (\ref{beta2})
is similar to Theorem 1 in \cite{WX07} for a fractional Brownian
sheet, after the change of coordinates.\footnote{Professor Ciprian Tudor
showed us that the relation (\ref{beta1}) still holds if $\hat{W}$
is replaced by an appropriate Gaussian random field related to a
fractional Brownian sheet. This connection provides an explanation for
the similarity between (\ref{beta2}) and Theorem 1 in \cite{WX07}.}
We remark that (\ref{beta2})
is different from the strong local nondeterminism for
Gaussian random fields with stationary increments in \cite{LuanX12}.
This suggests that the solution process $u(t,x)$ may have some
subtle properties that are different from those of Gaussian random
fields with stationary increments such as a fractional Brownian motion.
\end{remark}

\begin{proof}[Proof of Proposition \ref{prop2.1}]
Take $\delta = a/2$. For each $w \in \mathbb{S}^{k-1}$, let
\[
r(w) = \min_{1\le j \le n}|(t^j - t) - (x^j - x) \cdot w|.
\]
Since $u$ is a centered Gaussian random field, the conditional
variance $\operatorname{Var}(u(t, x)| u(t^1, x^1), \dots,$ $u(t^n, x^n))$
 is the squared distance of $u(t, x)$ from the
linear subspace spanned by $u(t^1, x^1), \dots,$ $u(t^n, x^n)$
in $L^2(\P)$. Thus, it suffices to show that there exist constants
$C > 0$ and $\delta > 0$  such that for all $(t, x), (t^1, x^1), \dots,
(t^n, x^n)$ in $[a, a'] \times [-b, b]^k$ with $|t-t^j| + |x-x^j|
\le \delta$, we have
\begin{equation}\label{eq2.3}
\E\bigg[ \bigg( u(t, x) - \sum_{j=1}^n \alpha_j u(t^j, x^j)
\bigg)^2 \bigg] \ge C \int_{\mathbb{S}^{k-1}} r(w)^{2-\beta}\,dw
\end{equation}
for any choice of real numbers $\alpha_1, \dots, \alpha_n$.
Using \eqref{eq2.1}, \eqref{eq2.2} and spherical coordinate $\xi
= \rho\, w$, we have
\begin{align*}
& \E\bigg[ \bigg( u(t, x) - \sum_{j=1}^n \alpha_j u(t^j, x^j) \bigg)^2 \bigg]\\
& = c_{k, \beta}\int_0^\infty ds \int_{\mathbb{R}^k} \frac{d\xi}{ |\xi|^{2+k - \beta }}
\bigg| \sin((t-s)|\xi|) \1_{[0, t]}(s) - \sum_{j=1}^n \alpha_j
e^{-i(x^j - x) \cdot \xi} \sin((t^j-s)|\xi|) \1_{[0, t^j]}(s)\bigg|^2\\
& \ge c_{k, \beta}\int_0^{a/2} ds \int_0^\infty \frac{ d\rho} { \rho^{3- \beta }}
\int_{\mathbb{S}^{k-1}} dw \bigg|
\sin((t-s)\rho) - \sum_{j=1}^n \alpha_j e^{-i\rho(x^j - x)\cdot w} \sin((t^j-s)\rho) \bigg|^2\\
& = \frac{c_{k, \beta}}{8} \int_0^{a/2} ds
\int_{-\infty}^\infty \frac{d\rho}{|\rho|^{3-\beta }} \int_{\mathbb{S}^{k-1}} dw
\bigg| \left(e^{i(t-s)\rho} - e^{-i(t-s)\rho}\right) \\
& \qquad \qquad \qquad  \qquad \qquad \qquad  \qquad \qquad \quad
- \sum_{j=1}^n \alpha_j e^{-i\rho(x^j - x)\cdot w}
\left(e^{i(t^j-s)\rho} - e^{-i(t^j-s)\rho}\right)\bigg|^2 \\
& =: \frac{c_{k, \beta}}{8} \int_{\mathbb{S}^{k-1}} A(w)\, dw.
\end{align*}
Let $\lambda = \min\{1, a/[2(a'+2\sqrt{k}b)]\}$ and consider the bump function
$\varphi: \mathbb{R} \to \mathbb{R}$ defined by
\[
\varphi(y) = \begin{cases}
\exp\left( 1 - \frac{1}{1-|\lambda^{-1}y|} \right), & |y| < \lambda,\\
0, & |y| \ge \lambda.
\end{cases} \]
Let $\varphi_r(y) = r^{-1}\varphi(y/r)$. For each $w \in \mathbb{S}^{k-1}$
such that $r(w) > 0$, consider the integral
\begin{align*}
I(w) := \int_0^{a/2} ds \int_{-\infty}^\infty d\rho \bigg[&\left(e^{i(t-s)\rho} - e^{-i(t-s)\rho}\right)\\
& - \sum_{j=1}^n \alpha_j e^{-i\rho(x^j - x)\cdot w} \left(e^{i(t^j-s)\rho} - e^{-i(t^j-s)\rho}\right) \bigg]
e^{-i(t-s)\rho} \widehat{\varphi}_{r(w)}(\rho).
\end{align*}
By the inverse Fourier transform (or one can apply the Plancherel theorem),
we have 
\begin{align*}
I(w) &= 2\pi \int_0^{a/2} ds \bigg[ \varphi_{r(w)}(0) - \varphi_{r(w)}\big(2(t-s)\big)\\
& \quad -\sum_{j=1}^n \alpha_j \Big(\varphi_{r(w)}\big((x^j-x)\cdot w - (t^j-t)\big) -
\varphi_{r(w)}\big((x^j-x)\cdot w - (t^j-t) + 2(t^j-s)\big) \Big)\bigg].
\end{align*}
Note that $r(w) \le |t^j - t| + |x^j - x| \le a' + 2\sqrt{k}b$.
For any $s \in [0, a/2]$, we have $2(t-s)/r(w) \ge a/[(a'+2\sqrt{k}b)]$
and $|(x^j-x)\cdot w - (t^j-t)|/r(w) \ge 1$, thus
\[
\varphi_{r(w)}\big(2(t-s)\big) =0 \ \hbox{ and }\
\varphi_{r(w)}\big((x^j-x)\cdot w - (t^j-t)\big) = 0
\ \hbox{ for } j = 1, \dots, n.
\]
Also, $[(x^j-x)\cdot w - (t^j-t) + 2(t^j-s)]/r(w)
\ge (-\delta+a)/[(a'+2\sqrt{k}b)] \ge \lambda$, thus
\[\varphi_{r(w)}\big((x^j-x)\cdot w - (t^j-t) + 2(t^j-s)\big) = 0.\]
It follows that 
\[
I(w) =  a\pi\, r(w)^{-1}.
\]
On the other hand, by the Cauchy--Schwarz inequality and scaling,
we obtain
\begin{align*}
(a\pi)^2 r(w)^{-2} = |I(w)|^2 &\le A(w)\times \int_0^{a/2} ds
\int_{-\infty}^\infty d\rho \, |\widehat{\varphi}(r(w) \rho)|^2|\rho|^{3-\beta}\\
& = (a/2) A(w) r(w)^{\beta - 4}\int_{-\infty}^\infty d\rho \,
|\widehat{\varphi}(\rho)|^2 |\rho|^{3-\beta}\\
& = C A(w) r(w)^{\beta - 4}
\end{align*}
for some finite constant $C$. Hence we have
\begin{equation} \label{Eq:A}
 A(w) \ge C'r(w)^{2-\beta}
 \end{equation}
and this remains true if $r(w) = 0$. Integrating both sides
of (\ref{Eq:A}) over $\mathbb{S}^{k-1}$ yields \eqref{eq2.3}.
\end{proof}

\section{Exact Uniform Modulus of Continuity}

It is known that sectorial local nondeterminism is useful
for proving the exact uniform modulus of continuity for
Gaussian random fields \cite{MWX13}. In this section we show
that the form of local nondeterminism in Proposition \ref{prop2.1}
can serve the same purpose for deriving the exact uniform
modulus of continuity of $u(t, x)$.

Let us denote
$$\sigma\big[(t, x), (t', x')\big] = \E[(u(t,x) - u(t', x'))^2]^{1/2}.$$
Recall from \cite[Proposition 4.1]{DS10} that for any $0 < a < a'
 < \infty$ and $0 < b < \infty$, there are positive constants $C_1$
 and $C_2$ such that
\begin{equation}\label{eq10.1}
C_1\bigg(|t-t'|+\sum_{j=1}^k|x_j-x'_j|\bigg)^{2-\beta}
\le \sigma[(t, x), (t', x')]^2 \le C_2 \bigg(|t-t'| +
\sum_{j=1}^k|x_j-x'_j|\bigg)^{2-\beta}
\end{equation}
for all $(t, x), (t', x') \in [a, a']\times [-b, b]^{k}$.

The following result establishes the exact uniform modulus of continuity
of $u(t, x)$ in the time and space variables $(t, x)$.

\begin{theorem}\label{Th:mod}
Let $I = [a, a'] \times [-b, b]^{k}$, where $0 < a < a' < \infty$
and $0 < b < \infty$. Let
\[
\gamma\big[(t, x), (t', x')\big] = \sigma\big[(t, x), (t', x')\big]
\sqrt{\log{(1+\sigma\big[(t, x), (t', x')\big]^{-1})}}.
\]
Then there is a positive finite constant $K$ such that
\begin{equation}\label{Eq:UM}
\lim_{\varepsilon \to 0+} \sup_{\substack{(t, x), (t', x') \in I,
\\ \sigma [(t, x), (t', x')] \le \varepsilon}} \frac{|u(t, x) - u(t', x')|}
{\gamma\big[(t, x),(t', x')\big]} = K, \quad \hbox{\rm a.s.}
\end{equation}
\end{theorem}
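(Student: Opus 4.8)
The plan is to establish matching upper and lower bounds for the normalized oscillation and to combine them with a zero--one law. Write $F(\varepsilon)$ for the supremum appearing in \eqref{Eq:UM}. Since the set of admissible pairs shrinks as $\varepsilon\downarrow 0$, the function $F$ is nondecreasing in $\varepsilon$, so $L:=\lim_{\varepsilon\to 0+}F(\varepsilon)$ exists in $[0,\infty]$ almost surely; the content of the theorem is that $L$ equals a positive finite constant. By \eqref{eq10.1} the canonical metric satisfies $\sigma[(t,x),(t',x')]\asymp \rho^{H}$, where $\rho=|t-t'|+\sum_{j}|x_j-x_j'|$ and $H=(2-\beta)/2\in(0,1)$. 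Consequently the covering number of $(I,\sigma)$ by balls of radius $r$ is of order $r^{-(k+1)/H}$, which is exactly the polynomial growth that makes $\gamma=\sigma\sqrt{\log(1+\sigma^{-1})}$ the correct normalizing function. The upper bound uses only the metric estimate \eqref{eq10.1}, while the lower bound uses the strong local nondeterminism of Proposition \ref{prop2.1}.

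For the upper bound I would run a dyadic chaining argument. Fix scales $\varepsilon_n=2^{-n}$ and choose, for each $n$, a $\sigma$-net $\mathcal N_n\subset I$ of cardinality $\asymp \varepsilon_n^{-(k+1)/H}$. For a fixed pair $p,q\in\mathcal N_n$ with $\sigma[p,q]\le\varepsilon_n$, the increment $u(p)-u(q)$ is centered Gaussian with standard deviation at most $\varepsilon_n$, so $\P(|u(p)-u(q)|>\kappa\,\varepsilon_n\sqrt{\log(1/\varepsilon_n)})\le 2\exp(-\tfrac12\kappa^2\log(1/\varepsilon_n))$. Taking a union bound over the $\asymp \varepsilon_n^{-2(k+1)/H}$ net-pairs and choosing $\kappa$ large enough that $\tfrac12\kappa^2>2(k+1)/H$, the resulting probabilities are summable in $n$, and Borel--Cantelli controls the oscillation over the net. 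A standard telescoping/continuity correction, summed over the dyadic scales, passes from net-pairs to arbitrary pairs and is of lower order because the chaining series converges; by monotonicity of $F$ this interpolates across all $\varepsilon$. This yields $L\le K_1<\infty$ almost surely, with $K_1$ of order $\sqrt{(k+1)/H}$.

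For the lower bound I would use Proposition \ref{prop2.1} together with a Borel--Cantelli argument built on a product estimate. At scale $\varepsilon$, place points $p_1,\dots,p_m\in I$ with consecutive $\rho$-separation $\asymp\varepsilon^{1/H}$, set $Z_i=u(p_{i+1})-u(p_i)$, and note $\sigma[p_i,p_{i+1}]\le\varepsilon$. Conditioning successively and using that $\mathrm{Var}(Z_i\mid Z_1,\dots,Z_{i-1})=\mathrm{Var}(u(p_{i+1})\mid u(p_1),\dots,u(p_i))$, together with the fact that a centered Gaussian density maximizes interval probabilities at its mean, one obtains
\[
\P\Big(\max_i|Z_i|\le\lambda\Big)\le\prod_i\P\big(|N|\le\lambda/\sqrt{v_i}\big),
\]
where $N$ is standard normal and $v_i$ is the conditional variance. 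Proposition \ref{prop2.1} gives $v_i\ge C\int_{\mathbb S^{k-1}}\min_{j\le i}|(t_{i+1}-t_j)+(x_{i+1}-x_j)\cdot w|^{2-\beta}\,dw$. In the simplest configuration, all $p_i$ share a common spatial coordinate and are spread in the time variable; then the sphere integral collapses to $\omega_{k-1}\min_{j\le i}|t_{i+1}-t_j|^{2-\beta}\asymp\varepsilon^{2}$, so $v_i\gtrsim\varepsilon^2$. With $\lambda=\kappa'\,\varepsilon\sqrt{\log(1/\varepsilon)}$ and $m\asymp\varepsilon^{-1/H}$, the product is at most $\exp(-c\,\varepsilon^{\kappa'^2/2-1/H})$, which is summable along $\varepsilon_n=2^{-n}$ once $\kappa'$ is small enough, and Borel--Cantelli gives $L\ge K_2>0$ almost surely.

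Finally, $L$ is measurable with respect to the germ field $\bigcap_{\varepsilon>0}\sigma\{u(p)-u(q):p,q\in I,\ \sigma[p,q]\le\varepsilon\}$, which is trivial by the Gaussian zero--one law, so $L$ equals a deterministic constant $K$; the two bounds give $K\in[K_2,K_1]\subset(0,\infty)$, proving \eqref{Eq:UM}. I expect the main obstacle to be the lower bound's conditional-variance estimate in full dimension: because $u$ satisfies only the sectorial form of local nondeterminism and not Pitt's isotropic version, recovering $v_i\gtrsim\varepsilon^2$ for a genuinely $(k+1)$-dimensional grid (needed to match the entropy constant $K_1$ and hence to pin down the value of $K$) requires showing that the previous grid points cannot make $\min_{j\le i}|(t_{i+1}-t_j)+(x_{i+1}-x_j)\cdot w|$ small for almost every direction $w\in\mathbb S^{k-1}$ simultaneously. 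This geometric lower bound on the sphere integral, and its conversion into the product-form independence above, is the delicate heart of the argument; the one-dimensional reduction sketched above sidesteps it but yields only positivity of $K$ rather than its sharp value.
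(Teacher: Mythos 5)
Your proposal is correct and follows essentially the same route as the paper's proof: an upper bound from the metric entropy estimate implied by \eqref{eq10.1}, a lower bound from Proposition \ref{prop2.1} via successive conditioning, Anderson's inequality and Borel--Cantelli along a chain of $\asymp\varepsilon^{-1/H}$ points with $\sigma$-separation $\asymp\varepsilon$ (the paper moves the chain diagonally in $(t,x)$ while you move it in $t$ alone, and both choices make the sphere integral of order $\varepsilon^{2}$), followed by the Marcus--Rosen zero--one law to upgrade the two bounds to a single constant. The concern in your closing paragraph is moot, since the theorem asserts only that $K$ exists in $(0,\infty)$ and not its value, so the one-dimensional chain suffices exactly as in the paper; just restrict the chain to a span of length at most $\delta$ so that Proposition \ref{prop2.1} applies to all conditioning points, and replace your claimed equality of conditional variances by the inequality $\operatorname{Var}(Z_i\mid Z_1,\dots,Z_{i-1})\ge\operatorname{Var}\big(u(p_{i+1})\mid u(p_1),\dots,u(p_i)\big)$ (or condition on the values themselves, as the paper does), which is the direction you need.
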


\begin{proof}
For any $\varepsilon > 0$, let
\[
J(\varepsilon) = \sup_{\substack{(t, x), (t', x') \in I,\\
\sigma[(t, x), (t', x')] \le \varepsilon}} \frac{|u(t, x) - u(t', x')|}
{\gamma\big[(t, x),(t', x')\big]}. \]
Since $\varepsilon \mapsto J(\varepsilon)$ is non-decreasing,
we see that the limit $\lim_{\varepsilon \to 0+} J(\varepsilon)$ 
exists a.s. In order to prove (\ref{Eq:UM}), we prove the following 
statements: there exist positive and finite constants $K^*$ and 
$ K_*$ such that 
\begin{equation}\label{Eq:UmU}
\lim_{\varepsilon \to 0+} J(\varepsilon)
\le K^*, \quad \hbox{ \rm  a.s.}
\end{equation} 
 and 
\begin{equation}\label{Eq:UmL}
\lim_{\varepsilon \to 0+} J(\varepsilon)
\ge K_*, \quad \hbox{ \rm  a.s.}
\end{equation} 
Then the conclusion of Theorem \ref{Th:mod} follows from Lemma 7.1.1 
of \cite{MR} where $\tau$ is chosen to be the Euclidean metric and $d$ is the 
canonical metric $\sigma[(t, x), (t', x')]$. [It is a 0-1 law for the modulus of continuity 
which is obtained by applying Kolmogorov's 0-1 law to the Karhunen--Lo\`eve expansion 
of $u(t,x)$.]


The proof of the upper bound \eqref{Eq:UmU} is standard. 
For any $\varepsilon > 0$, denote by $N(I, \varepsilon,\sigma)$  the 
smallest number of balls of radius $\varepsilon$ in the canonical metric $\sigma\big[(t, x), (t', x')\big]$ 
that are needed to cover the compact interval $I$. By the 
upper bound in \eqref{eq10.1}, we have 
$N(I, \varepsilon,\sigma) \le C \varepsilon^{-(1+k)/(2-\beta)}.$
Hence (\ref{Eq:UmU}) follows from the metric entropy bound for the uniform
modulus of continuity of a Gaussian field (cf. e.g., \cite[Theorem 1.3.5]{AT07}
or \cite{MR}).

Next we prove the  lower bound (\ref{Eq:UmL}). This is accomplished by 
applying Proposition \ref{prop2.1}, a conditioning argument and the 
Borel--Cantelli lemma. We first choose $\delta$ according to
Proposition \ref{prop2.1} and let $\delta' = \min\{\delta/(1+\sqrt{k}), a'-a, 2b\}$.
Note that $\delta'$ depends only on $a$, $a'$ and $b$.
For each $n \ge 1$, let
\[
\varepsilon_n = [C_2((1+k)\delta')^{2-\beta}2^{-(2-\beta)n}]^{1/2}.
\]
For $i = 0, 1, \dots, 2^n$, let $t^{n, i} = a+ i\delta'2^{-n}$ and
$x^{n, i}_j = -b + i\delta'2^{-n}$.
Then
\begin{align*}
\lim_{\varepsilon \to 0+} J(\varepsilon) &= \lim_{n \to \infty}
\sup_{\substack{(t, x), (t', x') \in I,\\ \sigma[(t, x), (t', x')]
\le \varepsilon_n}} \frac{|u(t, x) - u(t', x')|}{\gamma[(t, x),(t', x')]}\\
& \ge \liminf_{n \to \infty} \max_{1 \le i \le 2^n}
\frac{|u(t^{n, i}, x^{n, i}) - u(t^{n, i-1}, x^{n, i-1})|}
{\varepsilon_n \sqrt{\log(1+\varepsilon_n^{-1})}}\\
&=: \liminf_{n \to \infty} J_n.
\end{align*}
To obtain the inequality, we have used the fact that $\sigma[(t^{n, i}, x^{n, i}), (t^{n, i-1}, x^{n, i-1})] \le \varepsilon_n$ and that the function 
$\varepsilon \mapsto \varepsilon \sqrt{\log (1/\varepsilon)}$ is 
increasing for $\varepsilon$ small. 

Let $K_* > 0$ be a constant whose value will be determined later.
Fix $n$ and write $t^{n, i} = t^i$, $x^{n, i} = x^i$ to simplify notations.
By conditioning, we can write
\begin{align}
\begin{aligned}
\label{eq10.2}
& \P\left( J_n \le K_* \right)\\
& = \P\bigg(\max_{1 \le i \le 2^n} \frac{|u(t^i, x^i) - u(t^{i-1}, x^{i-1})|}
{\varepsilon_n \sqrt{\log(1+\varepsilon_n^{-1})}} \le K_* \bigg)\\
& = \E\Bigg[ \1_{A} \P\Bigg( \frac{|u(t^{2^n}, x^{2^n}) - u(t^{2^n-1},
x^{2^n-1})|}{\varepsilon_n \sqrt{\log(1+\varepsilon_n^{-1})}}
\le K_*  \bigg| u(t^i, x^i) : 0 \le i \le 2^n-1 \Bigg) \Bigg],
\end{aligned}
\end{align}
where $A$ is the event defined by
\[
A = \Bigg\{\max_{1 \le i \le 2^n-1} \frac{|u(t^i, x^i) - u(t^{i-1}, x^{i-1})|}
{\varepsilon_n \sqrt{\log(1+\varepsilon_n^{-1})}} \le K_*\Bigg\}.
\]
Since $|t^{2^n}-t^i| + |x^{2^n} - x^i| \le \delta$, by Proposition
\ref{prop2.1} we have
\begin{equation}\label{Eq:Cvar}
\begin{split}
&\operatorname{Var}{\left(u(t^{2^n}, x^{2^n})| u(t^i, x^i) : 0 \le i \le 2^n-1\right)}\\
& \ge C \int_{\mathbb{S}^{k-1}} \min_{0 \le i \le 2^n-1} |(t^{2^n}-t^i) + (x^{2^n} - x^i) \cdot w|^{2-\beta} \, dw\\
& \ge C \int_{\{w \in \mathbb{S}^{k-1} : \, (1, \dots, 1) \cdot w \ge 0\}}  \min_{0 \le i \le 2^n-1}
|\delta'(2^n-i)2^{-n} + \delta'(2^n-i)2^{-n}(1, \dots, 1) \cdot w|^{2-\beta} \, dw\\
& \ge C(\delta')^{2-\beta} \,2^{-(2-\beta)n} \int_{\{w \in \mathbb{S}^{k-1} : \, (1, \dots, 1) \cdot w \ge 0\}}  dw\\
&= C_0\, \varepsilon_n^2
\end{split}
\end{equation}
for some constant $C_0 > 0$ depending on $a$, $a'$ and $b$.

Since the conditional distribution of $u(t^{2^n}, x^{2^n})$ is Gaussian with conditional variance 
$\operatorname{Var}{\left(u(t^{2^n}, x^{2^n})| u(t^i, x^i) : 0 \le i \le 2^n-1\right)}$, 
it follows from Anderson's inequality \cite{A55} and (\ref{Eq:Cvar}) that
\begin{align*}
& \P\Bigg( \frac{|u(t^{2^n}, x^{2^n}) - u(t^{2^n-1},
x^{2^n-1})|}{\varepsilon_n \sqrt{\log(1+\varepsilon_n^{-1})}}
\le K_*  \bigg| u(t^i, x^i) : 0 \le i \le 2^n-1 \Bigg)\\
& \le \P\Bigg( \frac{|u(t^{2^n}, x^{2^n})|}{\varepsilon_n \sqrt{\log(1+\varepsilon_n^{-1})}}
\le K_*  \bigg| u(t^i, x^i) : 0 \le i \le 2^n-1 \Bigg)\\
& \le \P\left( |Z| \le K_* \sqrt{C_0^{-1}
\log{(1+\varepsilon_n^{-1})}} \right)
\end{align*}
where $Z$ is a standard normal random variable. Using $\P(|Z| > x)
\ge (\sqrt{2\pi})^{-1} x^{-1} \exp(-x^2/2)$ for $x \ge 1$ and $1+\varepsilon^{-1} < 2/\varepsilon$ for $\varepsilon$ small, we deduce
that when $n$ is large the above probability is bounded from above by
\begin{align*}
1 - \frac{C(\varepsilon_n/2)^{K_*^2/(2C_0)}}{K_*\sqrt{\log{(2/\varepsilon_n)}}}
\le \exp\left( - \frac{C(\varepsilon_n/2)^{K_*^2/(2C_0)}}
{K_*\sqrt{\log{(2/\varepsilon_n)}}} \right) \le \exp\Bigg(- \frac{C_{K_*}2^{-\frac{(2-\beta)K_*^2}{4C_0}n}}{\sqrt{n}} \Bigg)
\end{align*}
where $C_{K_*}>0$ is a constant depending on $K_*$.
Then by \eqref{eq10.2} and induction, we have
\begin{align*}
\P \big(J_n \le K_*\big) \le \exp\Bigg( - 2^n \frac{C_{K_*}2^{-\frac{(2-\beta)K_*^2}{4C_0}n}}
{\sqrt{n}} \Bigg).
\end{align*}
We can now choose $K_* > 0$ to be a sufficiently small constant
such that $1-\frac{(2-\beta)K_*^2}{4C_0}>0$. Then 
$\sum_{n=1}^\infty \P\big(J_n \le K_*\big) < \infty$. Hence, by the Borel--Cantelli lemma,
$\liminf_n J_n \ge K_*$ a.s.\ and the proof is complete.
\end{proof}

\bigskip

\bigskip

\medskip


\textsc{Cheuk Yin Lee}: Department of Statistics
and Probability, C413 Wells Hall, Michigan State University, East Lansing, MI 48824, U.S.A.\\
E-mail: \texttt{leecheu1@msu.edu}\\
\medskip


\textsc{Yimin Xiao}: Department of Statistics
and Probability, C413 Wells Hall, Michigan State University, East Lansing, MI 48824, U.S.A.\\
E-mail: \texttt{xiao@stt.msu.edu}\\

\end{document}